\title{Embedding Quartic Eulerian Digraphs on the Plane}
\author{Dan Archdeacon \\
      Dept. of Math. and Stat. \\
      University of Vermont \\
      Burlington, VT 05405 \\ USA
\and
      C. Paul Bonnington\thanks{The author
is thankful for support from the Marsden Fund (Grant Number UOA-825)
administered by the Royal Society of New Zealand. This grant also supported
the other two authors on visits to NZ.} \\
      School of Mathematical Sciences\\
      Monash University\\
      Clayton, VIC 3800, Australia\\
      {\tt paul.bonnington@monash.edu}
\and
  Bojan Mohar\thanks{Supported in part by the
  Research Grant P1--0297 of ARRS (Slovenia), by an NSERC Discovery Grant (Canada)
  and by the Canada Research Chair program.}~\thanks{On leave from:
  IMFM \& FMF, Department of Mathematics, University of Ljubljana, Ljubljana,
  Slovenia.}\\
  Department of Mathematics\\ Simon Fraser University\\
  Burnaby, B.C.~~V5A 1S6 \\
  {\tt mohar@sfu.ca}\\
  \ 
  \\
  \textsl{In honour of Dan Archdeacon.} 
}
\date{}
\newtheorem{theorem}{Theorem}[section]
\newtheorem{lemma}[theorem]{Lemma}
\newtheorem{proposition}[theorem]{Proposition}
\newtheorem{problem}[theorem]{Problem}
\newtheorem{claim}{Claim}
\newcommand\E{\mathcal E}
\newcommand\Z{\mathbb Z}
\renewcommand\P{\cal P}
\begin{document}

\maketitle

\begin{abstract}
Minimal obstructions for embedding 4-regular Eulerian digraphs on the plane are considered in relation to the partial order defined by the cycle removal operation. Their basic properties are provided and all obstructions with parallel arcs are classified.
\end{abstract}

\section{Introduction}\label{introduction}

An {\em Eulerian digraph} is a directed graph such that at each vertex the
in-degree equals the out-degree. We allow our digraphs to have {\em loops}
(edges $uu$, where each loop counts towards the in-degree and the out-degree), and
{\em parallel edges}, that is, two copies of an edge $uv$, or two anti-directed edges $uv$ and $vu$. Our digraphs are not necessarily connected despite the usual convention underlying Eulerian graphs.

An {\em embedding} of an Eulerian digraph in a surface is a (not necessarily cellular) embedding of the
underlying graph such that in- and out-edges alternate in the rotation at each
vertex; hence the restriction that the in-degree equals the out-degree. In particular,
an Eulerian digraph with an embedding on the plane is called {\em diplanar}.
This kind of embedding for a digraph is very natural and was considered earlier in various contexts:
Andersen et al.~\cite{ABJ96} were motivated by questions about Eulerian trails with forbidden transitions; Bonnington et al.\ \cite{BCMM02,BHS04} and others \cite{CGH14,HLZX09} introduced digraph embeddings in the context of topological graph theory; Johnson \cite{Jo02} and Farr \cite{Farr13} explored different relations to the theory of graph minors in this context.
Other authors have considered different ways to embed directed graphs. For
example, Sneddon \cite{Sn04} studied ``clustered'' planar embeddings of digraphs where, at each vertex, all of the in-arcs occur sequentially in the local rotation. In \cite{Sn04} and \cite{BS11}, three different variations of minors are presented, each of which produces a finite set of obstructions to clustered planarity.
Clustered upward embeddings on the plane (where all edges are pointed ``upwards'') were considered in relation to graph drawing by Hashemi \cite{Ha01}.

Each face of an embedded Eulerian digraph is bounded by a directed cycle.
If the surface is orientable, then the faces fall into
two classes: those whose boundary cycle is clockwise and those whose boundary
cycle is anti-clockwise. It follows that the dual is bipartite. This is
not true for embeddings on a non-orientable surface, where the duals are necessarily non-bipartite.

A natural partial ordering on the set of all Eulerian digraphs is that of
{\em containment}: we say that $G$ contains an Eulerian digraph $H$ if $H$ is isomorphic to a subdigraph of $G$. Note that this is equivalent to saying that we can form $H$ from $G$ by a sequence
of removing directed cycles and removing isolated vertices. Note that removing directed cycles keeps us in
the class of Eulerian digraphs, which is why we allow disconnected graphs.

\begin{figure}[htb]
   \centering
   \includegraphics[width=5.2cm]{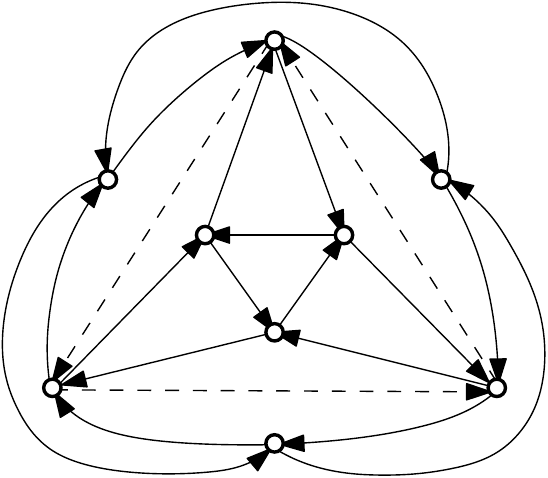}
   \caption{Removing directed cycles does not preserve diplanarity}
   \label{fig:1}
\end{figure}

We are interested in the class of digraphs that embed on a fixed surface.
One difficulty with the containment partial ordering is that embedding on
a surface is not hereditary under this order. An example is given in Figure~\ref{fig:1}; the removal of the dashed triangle gives a digraph which is not diplanar.

But all is not lost. Let us restrict our attention to Eulerian digraphs
with maximum in- and out-degree at most 2. The degree of each vertex
(the sum of the in-degree and out-degree) is hence either 0, 2, or 4. We will
usually delete isolated vertices, and {\em suppress} vertices $v$ of degree 2 by
replacing the directed arcs $uv,vw$ with a single arc $uw$. Hence we assume
our digraphs are regular of degree 4, so called {\em quartic}
Eulerian digraphs. For notational purposes, we will denote the class of all quartic Eulerian digraphs by $\E_4$.
For quartic Eulerian graphs we consider the partial order $(\E_4,\preceq)$
where we remove directed cycles, followed by suppressing any degree 2 vertices.

Quartic Eulerian digraphs are a very natural class of graphs to consider, as
they provide the simplest non-trivial way of studying embeddings of directed
graphs. In this sense they are the equivalent of studying embeddings of cubic undirected
graphs. They also appear as medial graphs of embeddings of undirected graphs in orientable surfaces (see a discussion later in this section).

The following lemma shows that embedding on a fixed surface is hereditary for the
class of quartic Eulerian digraphs in $\E_4$ with respect to the partial order $\preceq$.

\begin{lemma}
Let $G\in \E_4$ be a quartic Eulerian digraph. Suppose that $G$ embeds on a surface $S$. If $H \preceq G$, then $H$ also embeds on $S$.
\end{lemma}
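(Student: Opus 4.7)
The plan is to reduce to a single step and argue topologically. By definition of $\preceq$, the digraph $H$ arises from $G$ by a finite sequence of operations, each removing a directed cycle and then suppressing the resulting degree-$2$ vertices. Induction on the length of this sequence reduces the lemma to the one-step claim: if $G$ embeds on $S$, $C$ is a directed cycle of $G$, and $G'$ is obtained by deleting $E(C)$ and suppressing all degree-$2$ vertices that this creates, then $G'$ embeds on $S$.

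First, take the given embedding of $G$ on $S$ and simply delete the arcs of $C$ from the drawing, obtaining a (not necessarily cellular) drawing of $G-E(C)$ on $S$. The critical check is that this yields an Eulerian-digraph embedding in the sense of the paper: at every $v\notin V(C)$ the rotation is unchanged and still alternates in- and out-arcs, while at every $v\in V(C)$ the two deleted arcs are exactly one in-arc and one out-arc of $v$, so the two surviving arcs at $v$ are one in-arc and one out-arc, which alternate trivially. Thus the deletion step produces a valid Eulerian embedding of $G-E(C)$ on $S$.

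Second, suppress each $v\in V(C)$. At such $v$ the two incident arcs are an in-arc $uv$ and an out-arc $vw$, and the standard topological smoothing — merge the two arc-segments into one and remove $v$ — gives an embedding in which a single arc from $u$ to $w$ occupies the rotation slot at $u$ formerly held by $uv$ and the slot at $w$ formerly held by $vw$. Since $uv$ was an in-arc at $u$ and $vw$ was an out-arc at $w$, the in/out alternation at both endpoints is preserved. Applying this operation to all of $V(C)$ simultaneously (so that maximal chains of consecutive cycle vertices collapse to a single resulting arc) yields an embedding of $G'$ on $S$ of the required type.

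The step I expect to require the most care — more as bookkeeping than as a genuine obstacle — is the degenerate case: a suppression may create a loop (when the two surviving neighbours of $v$ coincide) or parallel arcs (when several vertices of $V(C)$ collapse together), and a run of consecutive vertices of $C$ must be smoothed as a single chain to yield the correct resulting arc. Because $\E_4$ was set up to allow loops, parallel arcs, and disconnected digraphs, these phenomena do not take us out of the class, and the induction step — hence the lemma — follows.
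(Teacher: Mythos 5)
Your argument is correct and is essentially the paper's own proof, just spelled out in more detail: delete the cycle's arcs from the embedding (noting that exactly one in-arc and one out-arc are removed at each cycle vertex, so alternation survives), then topologically smooth the degree-2 vertices, and induct on the number of cycle removals. The extra care you take with degenerate cases (loops, parallel arcs, chains of consecutive cycle vertices) is fine but not a departure from the paper's approach.
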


\begin{proof}
Consider the embedding of $G$. By removing the edges of a cycle $C$ in $G$, the in-out property is preserved on the embedding of $G-E(C)$. Clearly, suppressing vertices of degree 2 also preserves embeddability. Since $H$ is obtained from $G$ by a sequence of such operations, it follows that $H$ is also embeddable in $S$.
\end{proof}

Whenever a property $\P$ is hereditary for a finite poset of digraphs, it is natural to consider minimal elements that do not have property $\P$. These are called \emph{minimal excluded digraphs} or \emph{obstructions} for the property $\P$:
these are digraphs that do not have property $\P$, but any strictly smaller digraph has.
Excluding these obstructions gives a characterization of digraphs with the given property.

The main focus of this paper is a partial progress towards the following goal.

\begin{problem}
Determine the complete set of obstructions for diplanar quartic Eulerian digraphs.
\end{problem}

Bonnington, Hartsfield, and \v Sir\'a\v n \cite{BHS04} examined a similar problem for embedding (not necessarily quartic) Eulerian digraphs.
Their embeddings also required that the in- and
out-edges alternate around a vertex. The difference between our results and
theirs occurs both in the class of graphs considered and the partial order used:  they allowed
Eulerian digraphs of arbitrarily large degree, and the partial order allowed
the directed version of arc-contractions. They gave a characterization of
the minimal non-planar digraphs under their partial order. They used this
partial order precisely because the property of embedding on a surface is not
preserved under removing directed cycles for digraphs with maximum degree exceeding 4.
The combination of a different partial order and a more restricted class of graphs
make the problems considered in \cite{BHS04} and those considered here quite different. This
is reflected in the different obstruction sets.
Furthermore, no known method exists that allows one result to derive from the other.

Another partial order, obtained by splitting vertices of degree 4 into two vertices of degree 2, has been considered as well (see \cite{ADHM16} and a thesis of Johnson \cite{Jo02}).

Returning to the relationship with embedded undirected graphs, our central problem can also be formulated as follows. Given a graph $G$ embedded in
a surface $S$, the {\em medial graph} is the graph $M$ whose vertices are the edges of $G$, and two vertices of $M$ are adjacent whenever
the corresponding edges are consecutive in a face of the embedded $G$. Thus, the medial graph
is 4-regular. When $S$ is orientable, we can direct $M$ by directing the
facial cycles, and then placing the induced directions on the edges of the medial
graph. Under this orientation, $M$ becomes a quartic Eulerian digraph. Our problem is
equivalent to finding the minimum genus graph $G$ whose directed medial graph $M(G)$ is
our given $D$. In particular, our problem is to characterize those digraphs $M$ which
are the directed medial graphs of planar graphs.

We give some preliminary lemmas in Section \ref{basics}, present the known list of
obstructions in Section \ref{obstructions}, and  indicate directions for future research in
Section \ref{conclusion}.

\section{Preliminary Results}\label{basics}

We first establish some terminology. It will be convenient to
distinguish when the underlying undirected graph is {\em planar}, and when
the directed graph is diplanar as defined in the introduction.
A pair of edges $uv,vu$ will be called a {\em digon}. A pair of {\em parallel edges} $uv,uv$ will be called an {\em anti-digon}.

For the convenience of the reader, we state some basic properties that will be used in the proofs. Let $H$ be an Eulerian digraph. Then the following properties clearly hold.
\begin{itemize}
\item[(i)]
For every partition $(A,B)$ of $V(H)$, the number of edges in the cut from $A$ to $B$ is the same as the number of edges in the cut from $B$ to $A$.
\item[(ii)]
$E(H)$ can be partitioned into directed cycles. A particular consequence of this is that if $H$ is not a directed cycle and $xy\in E(H)$, then $H-xy$ contains a directed cycle.
\item[(iii)]
If $xy$ is an edge in $H$ and a digraph $H'$ is obtained from $H-xy$ by removing some directed cycles, then $H'$ contains a directed path from $y$ to $x$.
\end{itemize}

In this section we give some results that may help us to focus on the
underlying problem. We start with some simple facts about quartic obstructions.

\begin{lemma}
\label{lem:basic}
Let\/ $G$ be a minimal non-diplanar digraph in $\E_4$. Then $G$ has the following properties.

{\rm (a)} The underlying multigraph $\hat G$ has no loops, and has at most two undirected
edges joining any two vertices. Moreover, $\hat G$ is 4-edge-connected.

{\rm (b)} $G$ is strongly $2$-edge-connected, i.e., for any two pairs of vertices $u_1,u_2$ and $v_1,v_2$, there are edge-disjoint directed paths $P_1,P_2$, where $P_i$ starts at $u_i$ for $i=1,2$, and one of the paths ends at $v_1$ and the other one at $v_2$.
\end{lemma}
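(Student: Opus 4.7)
The plan is to prove both parts by contradiction, exploiting the minimality of $G$ throughout: whenever a forbidden configuration appears I will locate a directed cycle $C\subseteq G$ whose removal, followed by suppression of the resulting degree-$2$ vertices, yields some $H\prec G$, so that by minimality $H$ is diplanar, and then reinsert the configuration into a plane embedding of $H$ while preserving the alternating-rotation condition at every vertex.

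The easy parts of (a) are handled as follows. For a loop $\ell$ at a vertex $v$, the loop itself is a $1$-cycle; apart from $\ell$ the vertex has one in-arc $xv$ and one out-arc $vy$ (a second loop would isolate $v$ as a diplanar component, allowing the rest of $G$ to be pulled down and contradict minimality). Removing $\ell$ and suppressing $v$ yields $H\prec G$, planar; un-suppressing reinstates $x\to v\to y$, and placing $\ell$ inside either of the two faces at $v$ gives the alternating rotation $(x,\ell_{\mathrm{out}},\ell_{\mathrm{in}},y)$ of types $(\mathrm{in},\mathrm{out},\mathrm{in},\mathrm{out})$. For three arcs between $u$ and $v$ (WLOG two $u\to v$ arcs $e_1,e_2$ and one $v\to u$ arc $e_3$), a degree count forces $u$ and $v$ to have unique further neighbours $x,y$; removing the digon $\{e_2,e_3\}$ and suppressing $u,v$ again yields $H\prec G$, and re-drawing $e_2,e_3$ as a bigon in one face of the surviving arc $e_1$ produces alternating rotations $(x,e_1,e_3,e_2)$ at $u$ and $(e_1,e_3,e_2,y)$ at $v$. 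Both reconstructions contradict the non-diplanarity of $G$.

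The $4$-edge-connectivity is the crux. Property~(i) forces every edge cut of $\hat G$ to have even size, ruling out cuts of size $1$ or $3$, and a disconnected $G$ is excluded because removing a non-diplanar component via cycle decomposition would contradict minimality. So suppose $\{e_1,e_2\}$ is a $2$-edge cut with $e_1\colon a_1\to b_1$ and $e_2\colon b_2\to a_2$ separating $A$ from $B$, and define the augmented quartic Eulerian digraphs $G_A:=G[A]+\overrightarrow{a_1a_2}$ and $G_B:=G[B]+\overrightarrow{b_2b_1}$. If both are diplanar, I glue their embeddings by deleting the two synthetic arcs and rerouting through $e_1,e_2$, producing a planar embedding of $G$. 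Otherwise, WLOG $G_A$ is non-diplanar, and it contains a minimal obstruction $X\preceq G_A$ with $|V(X)|\le|A|<|V(G)|$. I then show $X\preceq G$: if $X$ omits the synthetic arc, then $X\subseteq G[A]\subseteq G$ is an Eulerian subdigraph, the Eulerian complement $G-E(X)$ decomposes into directed cycles, and removing them one-by-one yields $X\preceq G$; if $X$ uses the synthetic arc, I replace it in $X$ by a $b_1\to b_2$ directed path $Q_B$ in $B$ (such a path exists because the cycle through $e_1$ in any Eulerian decomposition of $G$ must also contain $e_2$), producing an expanded Eulerian subdigraph $X'\subseteq G$ from which $X$ is recovered by suppressing the interior of $Q_B$, whence $X\preceq X'\preceq G$. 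Either way $X\prec G$ is non-diplanar, contradicting minimality.

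Part~(b) then reduces to (a) via the standard super-source/sink construction and Menger's theorem: adjoin a vertex $s$ with out-arcs to $u_1,u_2$ and a vertex $t$ with in-arcs from $v_1,v_2$, and consider any $s$-$t$ edge cut. Its restriction to $V(G)$ is either trivial, in which case the cut contributes exactly the two source- or sink-arcs, or it is a nontrivial edge cut of $G$, which by (a) has undirected size at least $4$ and hence directed size at least $2$; so every $s$-$t$ cut has size $\ge 2$ and Menger yields two edge-disjoint $s\to t$ paths. Deleting $s$ and $t$ from these paths realises the claimed pairing $P_1,P_2$. The main obstacle will be the $2$-edge-cut subcase of (a): the augmented digraphs $G_A,G_B$ are not subdigraphs of $G$, so minimality cannot be applied to them directly, and the technical manoeuvre of expanding a synthetic arc through an $e_1\cdot Q_B\cdot e_2$ detour is precisely what allows a hypothetical obstruction in $G_A$ to be transported back into $G$.
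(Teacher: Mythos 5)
Your argument is essentially correct, and its core coincides with the paper's: at a $2$-arc cut you form the augmented quartic digraphs $G_A=G[A]+a_1a_2$ and $G_B=G[B]+b_2b_1$ and glue diplanar embeddings of the two pieces back together (through the positions vacated by the synthetic arcs, which preserves the in/out alternation) to contradict the non-diplanarity of $G$. Two organizational differences are worth noting. First, the paper proves (b) directly --- a failure of strong $2$-edge-connectivity yields, via Menger and the parity property (i), exactly such a two-arc cut --- and then deduces the $4$-edge-connectivity in (a) from (b); you go the other way, proving $4$-edge-connectivity first and recovering (b) by a super-source/sink Menger argument. That is a legitimate equivalent packaging. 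Second, your case ``one of $G_A,G_B$ is non-diplanar'' is an unnecessary detour: the very expansion trick you use to transport $X$ (replace the synthetic arc by $e_1\cdot Q_B\cdot e_2$ with $Q_B$ a directed $b_1$--$b_2$ path in $B$, observe the complement in $G$ is Eulerian, remove its cycles and suppress) applied to $G_A$ itself shows directly that $G_A\prec G$ and $G_B\prec G$; this is exactly the paper's use of property (ii), so by minimality both pieces are diplanar and only the gluing case ever arises. A few loose ends to tidy: $X\preceq G_A$ is a $\preceq$-minor, not a subdigraph, so ``$X\subseteq G[A]$'' should be phrased via the pre-suppression Eulerian subdigraph of $G_A$ witnessing $X\preceq G_A$ (your second case already handles the mechanics correctly); your three-parallel-arc argument tacitly assumes exactly three arcs, so the case of four arcs between two vertices (which forces $\{u,v\}$ to be a diplanar component, handled like your two-loop case) needs a one-line mention; and the gluing requires the standard normalization that the synthetic arcs lie on the outer faces with compatible orientations, achievable by choice of outer face and a reflection, which preserves alternation --- this is what the paper's Figure~\ref{fig:2} encodes.
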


\begin{figure}[htb]
   \centering
   \includegraphics[width=12cm]{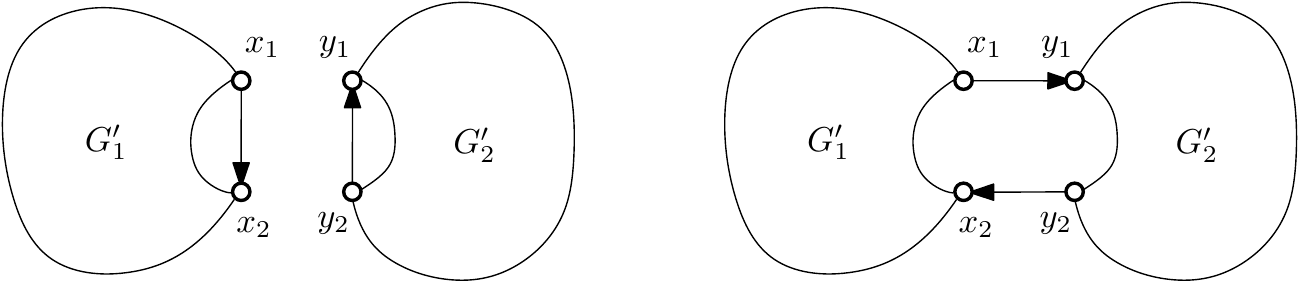}
   \caption{Dealing with 2-edge-cuts}
   \label{fig:2}
\end{figure}

\begin{proof}
The first claim in (a) is easy and the details are left to the reader. The second claim in (a) follows from (b), whose proof we discuss next.

Considering part (b), it is easy to prove that $G$ must be connected. Suppose that it is not strongly 2-edge-connected. By Menger's theorem there is an edge-cut consisting of fewer than four arcs. Since $G$ is Eulerian, the cut has precisely two arcs $e_1=x_1y_1$ and $e_2=y_2x_2$, one in each direction. Let us remove these arcs and form digraphs $G_1,G_2$ as follows. The digraph $G-e_1-e_2$ consists of two components $G_1'$ and $G_2'$, and we may assume that $x_1,x_2\in V(G_1')$ and $y_1,y_2\in V(G_2')$. We then set $G_1 = G_1' + x_1x_2$ and $G_2 = G_2' + y_2y_1$.
By using property (ii) from above, it is easy to see that $G_1\prec G$ and $G_2\prec G$. By the minimality of $G$, digraphs $G_1$ and $G_2$ can be embedded on the plane. We may assume that the added edges are on the boundary of the infinite face oriented differently. Figure \ref{fig:2} shows how these embeddings can be combined to obtain an embedding of $G$, thus yielding a contradiction.
\end{proof}

\begin{figure}[htb]
   \centering
   \includegraphics[width=10cm]{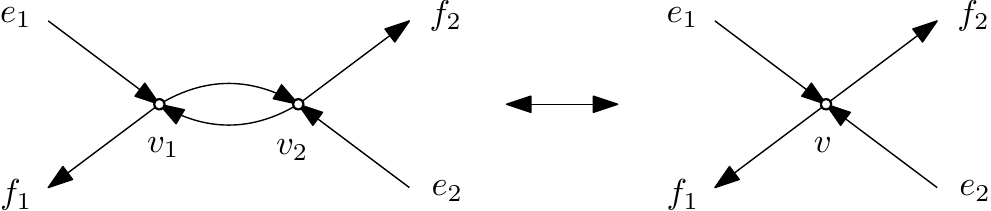}
   \caption{Contracting a digon}
   \label{fig:3}
\end{figure}

Let $G\in \E_4$ be a quartic Eulerian digraph and let $D$ be a digon. Define the {\em quotient}
$G/D$ to be the quartic digraph formed by identifying vertices $u$ and $v$ and deleting the two arcs in $D$.
We also say that $G/D$ is obtained from $G$ by \emph{contracting the digon} $D$.

\begin{lemma}
\label{lem:contractdigon}
Suppose that $G\in \E_4$ contains a digon $D$, and let $H = G / D$.

{\rm (a)} If $G$ is a diplanar obstruction, then $H$ is also a diplanar obstruction.

{\rm (b)} If $H$ is a diplanar obstruction, then $G$ is a diplanar obstruction if and only if $G-E(D)$ is diplanar.
\end{lemma}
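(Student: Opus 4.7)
The plan is to first prove a key equivalence that drives both parts of the lemma: $G$ is diplanar if and only if $H = G/D$ is diplanar. For the forward direction, start with a diplanar embedding of $G$. The in-out alternation at $u$ forces the two digon arcs $a_1, a_2$ to appear at cyclically adjacent positions in the rotation at $u$: the other two arcs at $u$ are one in and one out, and the only alternating interleavings of these four arcs place $a_1, a_2$ next to each other. The same holds at $v$, so $D$ bounds a $2$-face. Contracting this $2$-face identifies $u$ with $v$ and removes $a_1, a_2$, yielding a plane embedding of $H$; the rotation at the new vertex $w$ is the concatenation of the non-digon parts of the rotations at $u$ and at $v$, and a quick check confirms that in-out alternation at $w$ is inherited. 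For the reverse direction, fix a diplanar embedding of $H$. At $w$ the two in-arcs sit at opposite positions in the alternating rotation and so do the two out-arcs, so the pair $\{\alpha_u^{\mathrm{in}}, \alpha_u^{\mathrm{out}}\}$ of former arcs at $u$ (one in and one out) occupies two cyclically adjacent positions, as does $\{\alpha_v^{\mathrm{in}}, \alpha_v^{\mathrm{out}}\}$. Performing the corresponding planar split of $w$ and reinserting the digon $D$ between the two new vertices recovers a diplanar embedding of $G$.

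For part~(a), non-diplanarity of $G$ gives non-diplanarity of $H$ by the equivalence. For the minimality condition, I would take $H' \prec H$ given by a sequence of cycle removals and suppressions in $H$ and lift it to $G$: a cycle in $H$ avoiding $w$ lifts to a cycle in $G$ avoiding $u, v$; a cycle in $H$ through $w$ uses one in-arc and one out-arc at $w$ and lifts either to a cycle in $G$ through a single vertex $u$ or $v$ (when those arcs come from the same side in $G$) or to a cycle that uses one of the digon arcs to cross between $u$ and $v$ (when they straddle). The resulting $G' \prec G$ is diplanar by minimality of $G$; if $D$ is preserved in $G'$ then the equivalence gives $H' = G'/D$ diplanar, and otherwise exactly one digon arc survives in $G'$ and contracting it, which preserves diplanarity just as edge contraction does in a plane embedding, produces $H'$, again diplanar.

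For part~(b), the forward implication is immediate: $D$ is a directed cycle, so $G - E(D) \prec G$ and hence $G - E(D)$ is diplanar by the minimality of $G$. For the converse, $G$ is non-diplanar by the equivalence, since otherwise $H$ would be diplanar, contradicting $H$ being an obstruction. For the minimality of $G$, given $G' \prec G$ I would case on the fate of $D$ in $G'$. If both arcs of $D$ are missing, then $G' \preceq G - E(D)$ and $G'$ is diplanar by hypothesis and heredity. If $D$ is intact in $G'$, the lifting used in part~(a) produces $H' = G'/D \prec H$, diplanar since $H$ is an obstruction, so the equivalence yields $G'$ diplanar. If exactly one arc of $D$ is missing, the cycle that removed it is forced, by the in- and out-degree constraints at $u, v$, to use the non-digon in-arc at $u$ and the non-digon out-arc at $v$, and lifts to a cycle through $w$ in $H$; the subsequent $u, v$-suppressions in $G'$ then coincide with the $w$-suppression in the corresponding $H' \prec H$, producing the same underlying digraph, which is diplanar.

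The main obstacle is the ``exactly one digon arc missing'' case in part~(b): explicitly matching the suppressions of the two degree-two vertices $u, v$ in $G'$ (linked by the surviving digon arc) with the single suppression of $w$ in $H'$ requires identifying the two resulting digraphs carefully. Topologically both reductions amount to contracting a short path through the surviving digon arc, but the combinatorial verification that they yield identical digraphs deserves care.
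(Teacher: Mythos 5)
Your proposal takes essentially the same route as the paper: both directions of the key equivalence amount to contracting the digon across a bigon face and, conversely, splitting the merged vertex $w$ and reinserting $D$; minimality is handled by lifting cycles of $H$ to cycles of $G$ (through $u$, through $v$, or through one digon arc) and projecting cycles of $G$ to $H$; and the hypothesis that $G-E(D)$ is diplanar covers exactly the case in which the removed cycles use both arcs of $D$ (note that the removed arc set is Eulerian, so deleting $E(D)$ from it still leaves a union of directed cycles, giving $G'\preceq G-E(D)$).

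The one step stated too strongly is the inference ``the digon arcs are cyclically adjacent at $u$ and at $v$, so $D$ bounds a $2$-face.'' Adjacency in the rotations is indeed forced by alternation, but the closed curve formed by the two arcs of $D$ may still have other parts of the digraph drawn inside it: an entire component, or (when the non-digon edges at $u$ and at $v$ lie on opposite sides of this curve) a piece attached to the rest of the graph only through the two non-digon edges at $u$ (respectively at $v$), which is possible since such a $2$-arc cut is consistent with the Eulerian condition. In those embeddings $D$ does not bound a face, and contracting it requires a short extra argument: any such piece meets the bigon region only at $u$, only at $v$, or not at all, so it can be re-embedded in a face incident with the complementary corner (or anywhere, if it is a separate component), after which $D$ does bound a face and the contraction goes through; alternatively, in the split case one sees directly that $G/D$ is a one-vertex amalgamation of two diplanar pieces and glues the embeddings by hand. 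This matters because you apply the equivalence not only to the obstruction $G$ itself but to digraphs like $G-E(C)$, which need not be connected or $4$-edge-connected, so the missing re-embedding step cannot be waved away by Lemma~\ref{lem:basic}; in fairness, the paper's own proof asserts ``the digon $D$ bounds a face'' with the same lack of justification. The remaining bookkeeping in your case analysis (e.g., when a removed cycle passes through $u$ via both of its non-digon arcs, suppression turns the surviving digon into a loop, so ``$D$ intact in $G'$'' must be read before suppression; and your worried ``one arc missing'' case is immediate, since the surviving arc joins two degree-$2$ vertices and contracting it visibly yields $H-E(C)$) is harmless and is treated just as tersely in the paper.
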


\begin{proof}
Let $v_1,v_2$ be the vertices of $D$, and let $e_i,f_i$ be the edges incident with $v_i$ in $G-E(D)$ for $i=1,2$, as shown in Figure~\ref{fig:3}.

(a) Let us first show that $H$ is not diplanar. If it were, we could split the vertex $v$ obtained in contracting $D$ (since $e_1,f_1$ are consecutive in the local rotation around $v$) and then one could add the digon $D$ so that a diplanar embedding of $G$ would be obtained. (See Figure \ref{fig:3} moving right to left.) To show that $H$ is a diplanar obstruction it therefore suffices to see that for every cycle $C$ in $H$, $H-E(C)$ is diplanar. If $C$ does not pass through $v$, then we can use the planar embedding of $G-E(C)$. In this embedding, the digon $D$ bounds a face and thus it is easy to change it so that an embedding of $H-E(C)$ is obtained. (See Figure~\ref{fig:3} moving left to right.) If $C$ uses the edges $e_1$ and $f_1$ (or $e_2$ and $f_2$) then there is nothing to prove since in that case diplanarity of $G-E(C)$ implies diplanarity of $H-E(C)$ with an added loop at $v$. Finally, if $C$ uses the edges $e_1$ and $f_2$ (say), then we first embed $G-E(C)-v_1v_2$; by contracting the edge $v_2v_1$ of $D$, we thus obtain an embedding of $H-E(C)$. This shows that $H$ is a diplanar obstruction.

(b) Since $H$ is a diplanar obstruction, we see as above that $G$ is not diplanar. Thus, it suffices to see that $G-E(C)$ is diplanar for every cycle $C$ in $G$. The proof is similar to that in part (a) except that now we use embeddings of $H-E(C)$ to obtain embeddings of $G-E(C)$. We omit the details. The only added ingredient is that $G-E(D)$ also needs to be diplanar, which is guaranteed by the condition in the statement.
\end{proof}

By Lemma \ref{lem:contractdigon}, it suffices to find all diplanar obstructions without digons. Those that have digons, can be obtained from these by ``splitting vertices'' and adding digons (that is, the reverse operation to contraction of a digon). All we need to check is that after any such splitting we obtain a diplanar digraph prior to inserting the digon. Each vertex of $H\in\E_4$ can be split in two ways and then a digon joining the two resulting vertices of degree 2 can be added. We say that splitting of a vertex is \emph{admissible} if the digraph obtained after the splitting is diplanar. If $v$ is split into vertices $v^1,v^2$ of degree 2, and $p\ge1$ is an integer, we can \emph{add a path of $p$ digons} by adding vertices $x_1,\dots,x_{p-1}$ and digons between $x_i$ and $x_{i-1}$ for $i=1,\dots, p$, where $x_0=v^1$ and $x_p=v^2$. It is clear (by admissibility of the splitting and by Lemma \ref{lem:contractdigon}(b)) that this always gives a diplanar obstruction.

The following result gives the complete description of diplanar obstructions containing digons.

\begin{theorem}
\label{thm:insertdigons}
Let $H$ be a minimal non-diplanar graph in $\E_4$. Let $\{v_1,\dots,v_s\}$ be a set of $s\ge1$ vertices of $H$. For $i=1,\dots, s$, consider an admissible splitting of $v_i$ resulting into two vertices $v_i^1,v_i^2$ of degree $2$ and add a path of $p_i\ge1$ digons. Then the resulting digraph is a diplanar obstruction. Conversely, every diplanar obstruction which gives rise to $H$ after contracting a set of digons can be obtained from $H$ in this way.
\end{theorem}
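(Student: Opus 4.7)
The plan is to prove both implications by reducing them to repeated applications of Lemma~\ref{lem:contractdigon}(b), contracting a single digon at each step.

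For the forward direction, the non-diplanarity of $G$ is immediate: diplanarity is preserved under digon contraction (contract the digon inside a planar embedding), so if $G$ were diplanar then contracting all inserted digons one after another would yield a planar embedding of $H$, contradicting the hypothesis that $H$ is non-diplanar. For minimality, I would label the inserted digons $D_1,\dots,D_N$ so that within each digon-path at $v_i$ the labelling proceeds from one end of the path to the other, and set $G_0=G$ and $G_k=G_{k-1}/D_k$, so that $G_N=H$. Downward induction on $k$, invoking Lemma~\ref{lem:contractdigon}(b) applied to $D_k$ inside $G_{k-1}$ (whose quotient $G_k$ is an obstruction by the inductive hypothesis), reduces the claim ``$G_{k-1}$ is a diplanar obstruction'' to verifying that $G_{k-1}-E(D_k)$ is diplanar. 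With the chosen peeling order, $G_{k-1}-E(D_k)$ consists of the admissible split of $H$ at some $v_i$, a pendant shorter digon path hanging off one side of that split (possibly trivial), and the full digon-path constructions at the remaining $v_j$.

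The principal technical obstacle is to produce a single planar embedding witnessing this combined structure, because the embedding of the admissible split at $v_i$ must be compatible with the admissible splittings at the other $v_j$. I would handle this by starting from a planar embedding of $H$ with $v_i$ admissibly split (which exists by admissibility) and then, for each $v_j$ with $j\ne i$, locally substituting the admissible split of $v_j$ together with its digon path inside a small disc neighbourhood of $v_j$. Because each modification is purely local---altering the embedding only inside a disc around one vertex---the substitutions do not interfere and can be carried out sequentially, and the pendant shorter digon path at $v_i$ can be added inside any face incident to $v_i^1$ or $v_i^2$.

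For the converse, let $G$ be a diplanar obstruction whose contraction along a set $\Sigma$ of digons yields $H$. Since each vertex of $G$ has total degree $4$, it meets at most two digons of $\Sigma$, so the submultigraph of $G$ induced by $\Sigma$ is a disjoint union of paths and cycles. A cycle of digons would contract to an isolated vertex, which cannot appear in the quartic digraph $H$; therefore $\Sigma$ is the disjoint union of digon-paths $P_1,\dots,P_s$, each $P_i$ collapsing to a single vertex $v_i\in V(H)$ whose splitting is determined by the two degree-$2$ ends of $P_i$ in $G$. To verify admissibility of this splitting, I would choose an end-digon $D$ of $P_i$: by the minimality of the obstruction $G$, the digraph $G-E(D)$ is diplanar, and then successively contracting all remaining digons of $\Sigma$ inside a planar embedding (each contraction preserving diplanarity) yields $H$ with $v_i$ split in the prescribed way. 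Hence this splitting is diplanar and thus admissible, and $G$ arises from $H$ in precisely the manner described by the theorem.
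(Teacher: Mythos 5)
The forward direction of your argument has a genuine gap at precisely the step you flag as the principal obstacle. Reducing minimality to the claim that $G_{k-1}-E(D_k)$ is diplanar via Lemma~\ref{lem:contractdigon}(b) is fine, but your justification of that diplanarity does not do the work. Non-interference between discs around different vertices is not the issue; the issue is whether a \emph{single} substitution is possible at all. In the chosen embedding of $H$ split at $v_i$, the four edge-ends at $v_j$ have a fixed cyclic rotation, and you must show that the prescribed in--out pairing at $v_j$, together with an inserted digon path, can be drawn inside a disc around $v_j$ so that in- and out-edges still alternate at the two new vertices. The admissibility of the splitting at $v_j$, which is what you invoke, asserts only that \emph{some} diplanar embedding of $H$ split at $v_j$ exists; it gives no control over the rotation at $v_j$ in \emph{your} embedding, so it cannot license the substitution. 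What actually rescues the argument is a fact you neither state nor verify: because in- and out-edges alternate at $v_j$, \emph{both} in--out pairings at a degree-4 vertex occupy cyclically consecutive (non-interleaved) positions in the rotation, and a digon --- or a path of digons drawn along a channel --- can be inserted between the two halves with orientations consistent with alternation at both new vertices (the directed analogue of Figure~\ref{fig:3}, valid for either splitting, admissible or not). This short verification is the crux; it is also exactly what the paper's own proof compresses into the sentence that after adding a digon ``all previous splittings keep their admissibility.'' Without it your induction is incomplete; with it, it goes through, and one then sees that admissibility of the splitting at $v_j$, $j\ne i$, plays no role in that step --- it is needed only to supply the starting embedding while the digons at $v_j$'s own path are being peeled.

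Your converse, on the other hand, is correct and takes a genuinely different and arguably cleaner route than the paper. The paper deduces the converse by following a chain of single digon contractions and analysing which splittings of the intermediate obstructions are admissible; you instead argue directly: the contracted digons form vertex-disjoint digon-paths (no digon-cycles, since those would collapse to an isolated vertex), each collapsing to a single vertex $v_i$ of $H$; removing one end digon $D$ of such a path is removing a directed $2$-cycle, so minimality of $G$ gives diplanarity of $G-E(D)$; contracting the remaining digons, which preserves diplanarity, exhibits a diplanar embedding of $H$ with $v_i$ split in the induced way, proving admissibility. This avoids the admissibility-tracking altogether and is a nice complement to (and is independent of) the forward direction.
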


\begin{proof}
Adding one digon to any admissible splitting gives rise to a diplanar obstruction. After adding the digon, all previous splittings keep their admissibility. The two new vertices have the property that one of the splittings is not admissible since it gives a digraph isomorphic to the original obstruction, while the other one is admissible. By using these new admissible splittings, all we achieve is to extend a digon to a path of digons. This yields the theorem.
\end{proof}

\section{The Obstructions}\label{obstructions}

In this section we give the current set of known diplanar obstructions.

\subsection{Doubled cycles}
\label{subsect:doubledcycles}

The doubled cycle $\overrightarrow{C}_n^{(2)}$ ($n\ge3$) is formed
by replacing each edge $(i,i+1)$ ($i=1,\dots, n$, summation modulo $n$) in a directed cycle $\overrightarrow C_n$ with two
directed edges in parallel (that is, an anti-digon). It is not hard to show that these graphs are non-diplanar
and that they are minimal since removing any directed cycle leaves a digraph of maximum
degree 2.

\begin{figure}[htb]
   \centering
   \includegraphics[width=9cm]{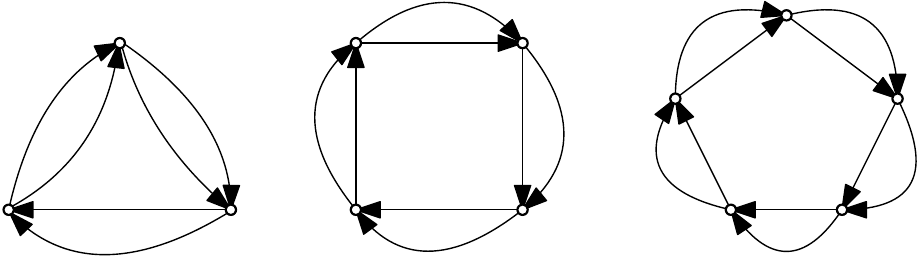}
   \caption{Doubled cycles of lengths 3, 4, and 5}
   \label{fig:4}
\end{figure}

By splitting a vertex of $\overrightarrow{C}_n^{(2)}$ (and suppressing the resulting vertices of degree 2), we obtain $\overrightarrow{C}_{n-1}^{(2)}$.
Thus, Lemma \ref{lem:contractdigon}(b) implies that we cannot obtain further diplanar obstructions by adding digons to $\overrightarrow{C}_n^{(2)}$ when $n\ge4$. The exception is when $n=3$. In that case, we can split one, two or three vertices and obtain diplanar obstructions shown in Figure \ref{fig:5}, where each digon can be replaced by a path of digons. Note that adding all three digons can be done in two different ways. One gives the 3-prism $P_3^+$, the other one the M\"obius ladder $M_3^+$.

\begin{figure}[htb]
   \centering
   \includegraphics[width=11.6cm]{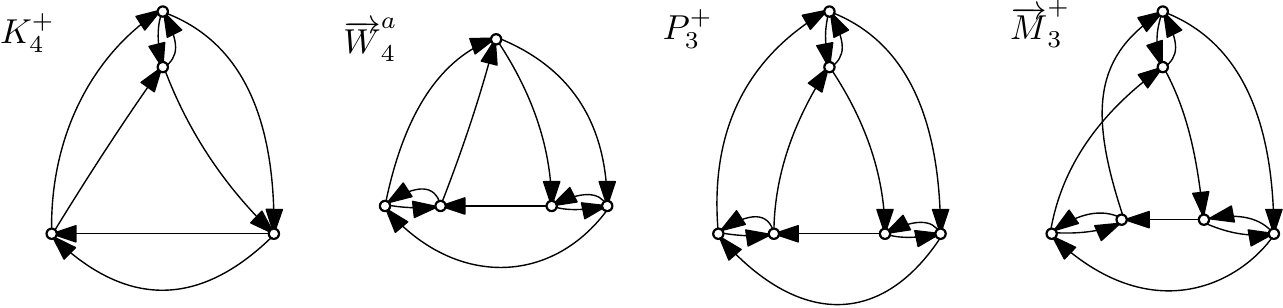}
   \caption{Adding digons to the doubled cycle of length 3}
   \label{fig:5}
\end{figure}

\subsection{Circulants and M\"obius ladders}

Consider a directed cycle $(1,2,\dots,2n)$, where $n\ge3$ is odd. For each even $i$, add the arc $(i,i+n)$, and
for each odd $i$, add the arc $(i,i-1)$. This gives a digraph $\overrightarrow M_n^+$ called the \emph{M\"obius ladder} since it can be obtained from the usual (undirected) M\"obius ladder with
$n$ spokes by replacing every other rim edge with a digon. The digraphs for $n=3,5$ in Figure \ref{fig:6} are shown as embedded in the projective plane;  the generalization is obvious.

\begin{figure}[htb]
   \centering
   \includegraphics[width=10cm]{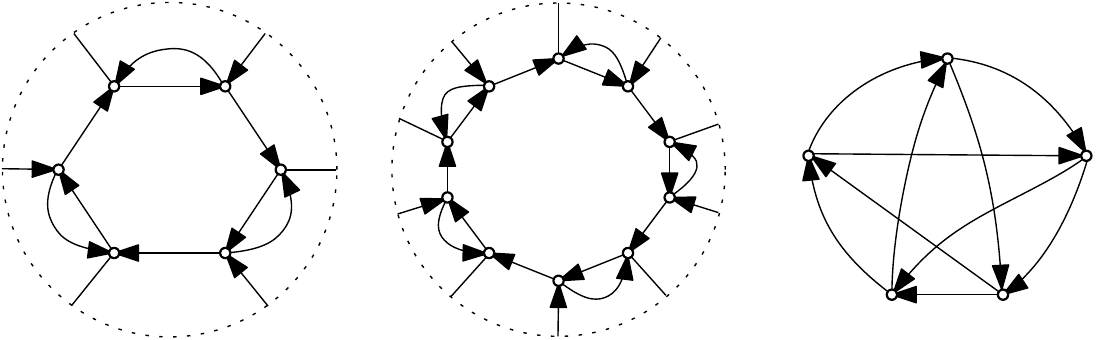}
   \caption{M\"obius ladders in the projective plane and the contraction $Z_5$}
   \label{fig:6}
\end{figure}

Contracting all 5 digons in $\overrightarrow M_5^+$ gives an example of a diplanar obstruction based on the orientation of $K_5$ shown in Figure \ref{fig:6}, where it is denoted as $Z_5$. In general, contracting all $n$ digons
in $\overrightarrow M_n^+$ gives a diplanar obstruction based on the Cayley digraph $Z_n$ with group $\Z_n$ ($n=2k+1$) using the
generating set $\{1, k\}$. The following proposition follows by considering the canonical directed embedding of $Z_n$ in the projective plane, and noting that any directed cycle with fewer than $n$ vertices will be ``essential'' in that embedding, thus yielding a directed planar embedding upon removal.

\begin{proposition}
For $n\ge5$, $Z_n$ is a diplanar obstruction.
\end{proposition}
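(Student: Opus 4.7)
To establish that $Z_n$ is a diplanar obstruction, we must verify two things: that $Z_n$ is not diplanar, and that deleting the edges of any directed cycle $C\subseteq Z_n$ (and suppressing degree-$2$ vertices) yields a diplanar digraph; by the hereditary property of embeddings under $\preceq$, this suffices to show every $H\prec Z_n$ is diplanar. The central tool is the canonical directed embedding of $Z_n$ in the projective plane obtained from Figure~\ref{fig:6} by contracting all $n$ digons of $\overrightarrow M_n^+$.

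For non-diplanarity, my plan is an Euler-formula argument. A diplanar embedding would have $f=n+2$ faces, each a directed cycle; since $Z_n$ contains no digons for $n\ge 5$ (a routine check shows $\{1,k\}\cap\{n-1,n-k\}=\emptyset$), every face has length at least $3$, giving $3(n+2)\le 4n$, a contradiction for $n=5$. For $n\ge 7$ I would refine the count using the structure of short directed cycles---the only directed triangles are the $n$ cycles $u\to u{+}1\to u{+}1{+}k\to u$, and the next shortest cycles have length $k+2$---combined with the bipartite-dual constraint that the two face classes each sum to $2n$ in length. An alternative route is to exhibit a $K_{3,3}$ subdivision in the underlying circulant $C_n(1,k)$: for $n=7$ take the bipartition $\{0,2,4\},\{1,3,5\}$ and route the missing pair $0\,5$ through vertex $6$, with analogous constructions for all larger $n$, establishing non-planarity of the underlying undirected graph.

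For minimality, split on $|C|$. If $|C|=n$, then every vertex of $Z_n-E(C)$ has degree $2$, so after suppression we have a disjoint union of directed cycles---trivially diplanar. If $|C|<n$, I would show that $C$ is essential (one-sided) in the canonical projective-planar embedding. Essentiality is detected by the $\mathbb{Z}/2$-homology class in $H_1(\mathbb{R}P^2;\mathbb{Z}/2)=\mathbb{Z}/2$: fix a non-contractible test curve $\gamma$, assign to each arc type its mod-$2$ intersection number $\alpha,\beta\in\{0,1\}$ with $\gamma$, so that a cycle with $a$ arcs of type $+1$ and $b$ of type $+k$ has class $\alpha a+\beta b\pmod 2$. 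The values of $\alpha$ and $\beta$ are pinned down by the two Hamilton cycles (the contracted rim and the spoke cycle formed by digon contraction), and the defining congruence $a+bk\equiv 0\pmod n$ together with $n=2k+1$ odd forces the class to be nontrivial whenever $a+b<n$. Once $C$ is shown to be essential, a tubular neighborhood of $C$ in $\mathbb{R}P^2$ is a M\"obius band, so cutting $\mathbb{R}P^2$ along $C$ yields a disk in which $Z_n-E(C)$ inherits a diplanar embedding.

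The hardest part will be the homology computation in the preceding paragraph: correctly determining $\alpha$ and $\beta$ from the canonical embedding and carrying out the parity argument uniformly across all non-Hamiltonian cycles. Triangles deserve particular scrutiny since their small length leaves little slack in the parity calculation. A secondary concern is the non-diplanarity step for $n\ge 7$, where the simple Euler bound is not tight enough and one must either refine the face-counting using the cycle inventory or fall back on the $K_{3,3}$-minor construction, each of which requires a modest amount of casework.
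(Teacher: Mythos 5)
Your overall strategy is the same as the paper's: take the canonical projective-planar directed embedding of $Z_n$, argue that short directed cycles are essential there, and cut along such a cycle to embed the rest in a disk. (The cutting step is sound, though it deserves a sentence: deleting an essential cycle of a projective-planar embedded graph does not in general leave a planar graph; it does here because in an in--out alternating embedding the two arcs of a directed cycle at each of its vertices are consecutive in the rotation, so the two surviving arcs lie on one local side of the cycle.) The genuine gap is your claim that the homology class is nontrivial whenever $a+b<n$: it is false, and carrying out your own scheme exposes it. The canonical embedding (contract the digons in the projective drawing of $\overrightarrow M_n^+$) has as its faces the $n$ directed triangles $i\to i+1\to i+1+k\to i$ together with the $+1$ Hamilton cycle ($n+1$ faces of total length $4n$, as Euler's formula requires), and the induced map to $H_1(\mathbb{R}P^2;\mathbb{Z}/2)$ is the parity of the number of $+k$ arcs: your recipe pins down $\alpha=0$ from the rim (which bounds a face) and $\beta=1$ from the spoke Hamilton cycle (which is essential), so a triangle, having $(a,b)=(1,2)$, is null-homologous --- indeed it \emph{bounds a face}. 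Nor is this an artifact of the chosen embedding: any directed projective embedding of $Z_n$ has at least four triangular faces, again by Euler's formula. So for $C$ a triangle the cut-along-$C$ argument yields nothing, and minimality in that case needs a separate proof that $Z_n-E(C)$ (after suppression, a quartic digraph on $n-3$ vertices) is diplanar; this is true (easy to check for $n=5,7$) but requires its own construction, and it is exactly the case your parity calculation cannot reach. You would also need to dispose of other hypothetical cycles with evenly many $+k$ arcs, i.e.\ $(a,b)=(c,2c)$ with $c\ge2$; these do not occur as simple cycles, but that is an extra little argument. Two minor points: the mod-2 intersection number of a single arc with a fixed test curve is not a priori constant on an arc type --- the legitimate move is to propose the edge labelling and verify it vanishes on every face boundary; and, in fairness, the paper's one-sentence justification is silent on the triangle issue too, but your written claim is the one that is false as stated.

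The non-diplanarity half is complete only for $n=5$. The plain Euler bound gives nothing for $n\ge7$, and the refined count does not close uniformly (it does for $n=7$: a diplanar embedding would need at least eight triangular faces while only seven directed triangles exist, but for $n=9$ the numbers are already compatible); your $K_{3,3}$ subdivision for $n=7$ is correct, but ``analogous constructions for all larger $n$'' is precisely what would have to be written down. A short uniform alternative: in the underlying undirected graph the chords $\{i,i+k\}$ and $\{i+1,i+k+1\}$ interleave on the $+1$ Hamilton cycle, so the conflict graph of the $n$ chords contains an odd cycle and is not bipartite; hence the underlying graph is non-planar for every odd $n\ge5$, and $Z_n$ is not diplanar.
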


For $n\ge5$, each vertex of the diplanar obstruction $Z_n$ has only one admissible splitting; that is, the one used to obtain $\overrightarrow M_n^+$. The other splitting gives rise to a digon, whose contraction yields $Z_{n-2}$ which is not diplanar. It follows that $\overrightarrow M_n^+$ is a diplanar obstruction.
Moreover, all diplanar obstructions whose digon contractions yield $Z_n$ are obtained from $\overrightarrow M_n^+$ by contracting some digons and replacing some of them by paths of digons of greater length.

Finally, we note that the obstruction $Z_3$ is isomorphic to $\overrightarrow{C}_3^{(2)}$ and the admissible splittings were discussed in Section~\ref{subsect:doubledcycles}.

\subsection{Two simple sporadic examples}\label{secsporadic}

Two sporadic examples without digons and anti-digons, $\overrightarrow K_{2,2,2}$ and $\overrightarrow K_{4,4}$, are shown in Figure \ref{fig:7}. These examples were found by looking at small 4-regular graphs.
Checking that these are diplanar obstructions is fairly easy, since the number of vertices is small and the order of the automorphism
group is large.

\begin{figure}[htb]
   \centering
   \includegraphics[width=7.2cm]{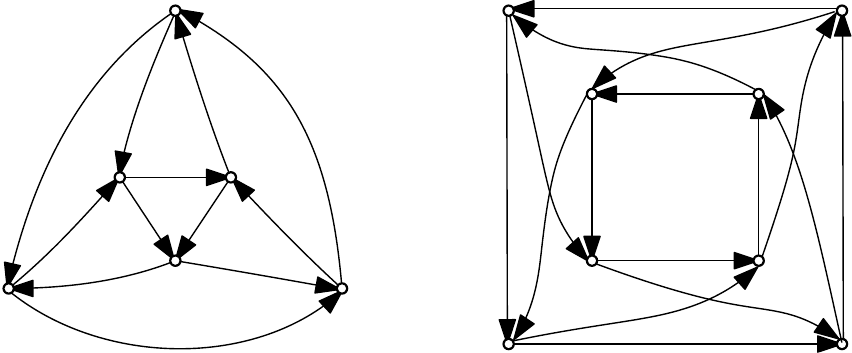}
   \caption{Non-diplanar orientations of $K_{2,2,2}$ and $K_{4,4}$}
   \label{fig:7}
\end{figure}

The octahedron $K_{2,2,2}$ is planar but its orientation $\overrightarrow K_{2,2,2}$ does not have its faces bounded by directed triangles. Since the octahedron has (essentially) a unique embedding on the plane, this digraph is not diplanar. There are many non-diplanar orientations of this graph, but up to symmetries, this is the only orientation of $K_{2,2,2}$ that gives a diplanar obstruction for diplanarity. Any other orientation can be obtained from the planar one by reversing orientations of edges of an Eulerian subgraph, for which we may assume that it has at most 6 edges. Changing orientation of a triangle is easy to exclude (removing the ``opposite triangle" leaves $\overrightarrow C_3^{(2)}$). The same holds if the orientation on two disjoint triangles is switched. The only directed cycles besides facial triangles are hamilton cycles, all of which are isomorphic to each other. But switching their orientation is the same as switching the orientation on two disjoint triangles. The only remaining possibility is to switch the edges on two triangles sharing a vertex. This gives $\overrightarrow K_{2,2,2}$. For this orientation, there are only two directed triangles (those used in switching); removing one of them gives a diplanar digraph consisting of three digons. Removing a cycle of any larger length leaves at most two vertices of degree 4, which is necessarily diplanar.

The other example is even easier. Since $K_{4,4}$ is not planar, $\overrightarrow K_{4,4}$ cannot be diplanar. Up to symmetries, $\overrightarrow K_{4,4}$ has only two different directed cycles, a 4-cycle and an 8-cycle, and their removal leads to diplanar digraphs.

\subsection{Obstructions containing anti-digons}

Two further examples of diplanar obstructions with anti-digons are shown in Figure \ref{fig:8}.

\begin{figure}[htb]
   \centering
   \includegraphics[width=7cm]{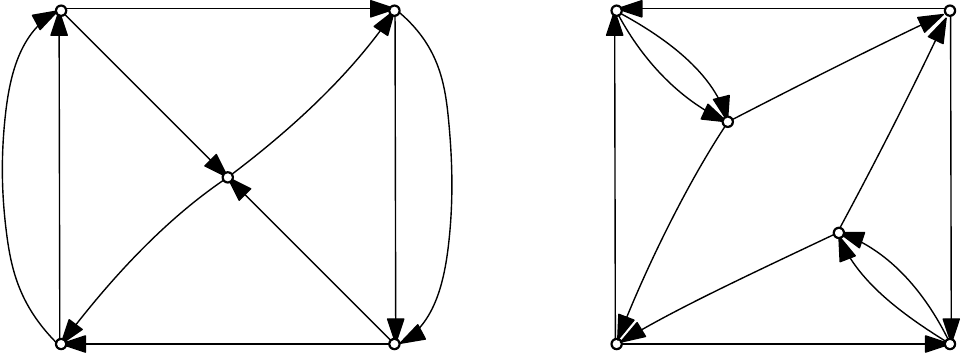}
   \caption{Two diplanar obstructions with anti-digons}
   \label{fig:8}
\end{figure}

These are just special cases of diplanar obstructions that can be obtained by taking a cyclic sequence of anti-digons as shown in Figure \ref{fig:9}(a) and (b), or combining building blocks of different lengths shown in Figure \ref{fig:9}(c). The first kind will be denoted by $\overrightarrow{L}_n$, where $n$ is the number of anti-digons; it will be called the \emph{anti-ladder} if $n$ is even and \emph{M\"obius anti-ladder} if $n$ is odd. The second kind is obtained by taking $p\ge 1$ copies of the digraph shown in Figure \ref{fig:9}(c), whose respective number of anti-digons are $n_1,n_2,\dots,n_p$ (and each $n_i\ge1$). Then the right vertex of each of these is identified with the left vertex of the next one (cyclically). The resulting digraph, denoted $\overrightarrow{N}(n_1,\dots,n_p)$, is clearly a diplanar obstruction. We observe that the diplanar obstructions in Figure \ref{fig:8} are $\overrightarrow{N}(2)$ and $\overrightarrow{N}(1,1)$, respectively. Furthermore, it is also worth observing that $\overrightarrow{C}_3^{(2)} = \overrightarrow{N}(1)$.

\begin{figure}[htb]
   \centering
   \includegraphics[width=11cm]{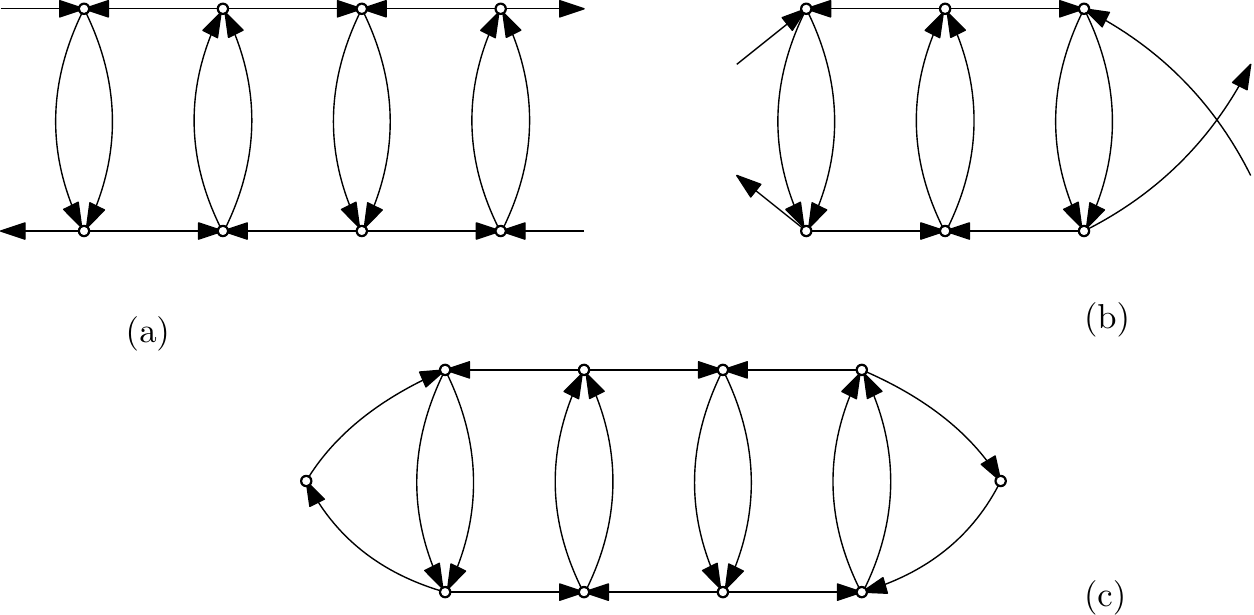}
   \caption{Building blocks for diplanar obstructions with anti-digons. (a) and (b) are ladders (even number of anti-digons) and M\"obius ladders (odd number of anti-digons), (c) shows a basic building block for the remaining diplanar obstructions.}
   \label{fig:9}
\end{figure}

\begin{theorem}
\label{thm:antidigons}
Suppose that $G$ is a diplanar obstruction in $\E_4$ that has no digons, but contains an anti-digon $D$.
If every cycle in $G$ intersects $D$, then $G$ is isomorphic to $\overrightarrow{C}_n^{(2)}$ for some $n\ge3$.
Otherwise $G$ is either a $($M\"obius$)$ anti-ladder $\overrightarrow{L}_n$ $(n\ge2)$ or is isomorphic to $\overrightarrow{N}(n_1,\dots,n_p)$, where $p$ and $n_1,\dots,n_p$ are positive integers.
\end{theorem}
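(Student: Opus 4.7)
The plan is to split the argument into the two cases appearing in the statement, depending on whether every directed cycle of $G$ shares an arc with the anti-digon $D=\{e_1,e_2\}$, where $e_1,e_2$ are the two parallel arcs from $u$ to $v$.

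For the first case, where every directed cycle uses an arc of $D$, I would use fact~(ii) to decompose $E(G)$ into edge-disjoint directed cycles. Since the two arcs of $D$ have the same direction, no simple directed cycle can use both, so every cycle of the decomposition uses at most one arc of $D$; by hypothesis each uses at least one, so the decomposition consists of exactly two cycles $C_1,C_2$, where $C_i$ is the arc $e_i$ followed by a directed path $P_i$ from $v$ to $u$ in $G-E(D)$. A vertex $w\neq u,v$ has in-degree and out-degree $2$, and each $P_j$ can contribute at most one in-arc and one out-arc at $w$, so $w$ is an internal vertex of both paths and $V(P_1)=V(P_2)$. If some pair of internal vertices $w,w'$ appeared in opposite orders along the two paths, then concatenating the sub-path of $P_2$ from $w'$ to $w$ with the sub-path of $P_1$ from $w$ to $w'$ would give an edge-disjoint closed directed trail in $G-E(D)$, hence a directed cycle avoiding $D$, contradicting the case hypothesis. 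Therefore $P_1$ and $P_2$ traverse the same vertex sequence, an anti-digon appears between each pair of consecutive vertices, and $G\cong \overrightarrow{C}_n^{(2)}$.

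For the second case, choose a directed cycle $C^*$ with $E(C^*)\cap E(D)=\emptyset$. By the minimality of $G$, the digraph $H=G-E(C^*)$ (with degree-$2$ vertices suppressed) is diplanar, so I fix a diplanar embedding $\Pi$ of $H$. The anti-digon $D$ survives in $H$, and by combining Lemma~\ref{lem:basic} (in particular $4$-edge-connectivity and the absence of a third parallel arc) with the in/out-alternation condition of a diplanar embedding, one shows that the two arcs of $D$ must bound a facial bigon in $\Pi$. The goal is then to read off all consistent ways of re-inserting $C^*$ while keeping the result non-diplanar and minimal. I would argue that every anti-digon of $G$ lies in a maximal ``stack'' of anti-digons with a common pair of endpoints in $\Pi$; these stacks become the building blocks of Figure~\ref{fig:9}, where a single stack of length $n$ produces $\overrightarrow{L}_n$ (the anti-ladder or M\"obius anti-ladder according to parity), and several stacks of lengths $n_1,\dots,n_p$ arranged cyclically produce $\overrightarrow{N}(n_1,\dots,n_p)$. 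The no-digon assumption is invoked repeatedly to rule out configurations that would instead be handled by Theorem~\ref{thm:insertdigons}.

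The main obstacle is the classification in Case~2: one must simultaneously control where each anti-digon sits in $\Pi$ and analyse how the removed cycle $C^*$ weaves through the facial bigons bounded by the anti-digons. The hardest sub-task is excluding ``twisted'' local patterns that would either contradict minimality, violate diplanarity of some proper sub-digraph, or create a forbidden digon; the strong $2$-edge-connectivity given by Lemma~\ref{lem:basic}(b) is the tool that keeps the analysis finite and forces a cyclic global structure. The even/odd dichotomy between anti-ladders and M\"obius anti-ladders then falls out naturally depending on whether the reinserted $C^*$ closes up in an orientation-preserving or orientation-reversing way around the stacks. Case~1 is, by contrast, essentially combinatorial and is the easier half.
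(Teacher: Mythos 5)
Your Case~1 argument is essentially correct and even a little cleaner than the paper's: decomposing $E(G)$ into directed cycles, observing that each can use at most one arc of $D$, and then comparing the vertex orders of the two resulting $(v,u)$-paths does force $G\cong\overrightarrow{C}_n^{(2)}$. (The paper instead invokes strong 2-edge-connectivity from Lemma~\ref{lem:basic}(b) to build two closed trails through $D$, but the conclusions agree.) Be aware, though, that your dichotomy is by edge-intersection with $D$ while the theorem's ``intersects'' is read in the paper as vertex-intersection; your Case~2 therefore also has to absorb cycles that avoid $E(D)$ but pass through $u$ or $v$, which your sketch never addresses.

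The genuine gap is Case~2, which is the heart of the theorem and of the paper's proof, and your proposal contains no argument there: saying that anti-digons ``lie in maximal stacks'' which ``become the building blocks of Figure~\ref{fig:9}'' is a restatement of the desired conclusion, and you yourself defer the decisive step (excluding twisted patterns, forcing the cyclic arrangement) as the ``main obstacle.'' Worse, the one concrete structural claim you do make is false: in a diplanar embedding of $H=G-E(C^*)$ with $C^*$ vertex-disjoint from $D$, the in/out alternation at $u$ and $v$ forces the two remaining in-arcs at $u$ (and the two out-arcs at $v$) to lie on \emph{opposite} sides of the closed curve formed by the two arcs of $D$; so $D$ does not bound a facial bigon. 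This separation of the rest of the graph into an inside part and an outside part is exactly what the paper exploits: it defines connectors $(C,Q_1,Q_2)$ in $G'=G-\{u,v\}$ and proves, via Claims~\ref{cl:1}--\ref{cl:6}, that every connector is full, that $C$ meets each trail $Q_i$ in at most two consecutive vertices forming an anti-digon (or a single repeated vertex flanked by a 3-cycle through another anti-digon), and then iterates this local structure around the digraph to obtain $\overrightarrow{L}_n$ or $\overrightarrow{N}(n_1,\dots,n_p)$. Without this machinery, or some equivalent argument controlling how the removed cycle meets the two sides of each anti-digon, your proposal does not establish the classification.
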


\begin{proof}
Let $u,v$ be the vertices of $D$ and suppose the parallel edges in $D$ are oriented from $u$ to $v$. Let $u_1,u_2$ and $v_1,v_2$ be the in-neighbors of $u$ and the out-neighbors of $v$, respectively. By Lemma \ref{lem:basic}(a), these vertices are distinct from $u$ and $v$. By Lemma \ref{lem:basic}(b), $G$ contains edge-disjoint paths $P_1,P_2$ from $\{v_1,v_2\}$ to $\{u_1,u_2\}$ and by adjusting notation, we may assume that $P_i$ joins $v_i$ and $u_i$, $i=1,2$. Let $Q_i = P_i + u_iu + uv + vv_i$.

Suppose first that every cycle in $G$ intersects $D$. In that case $G-E(Q_1\cup Q_2)$ has no edges, meaning that $G= Q_1\cup Q_2$. The paths $P_1$ and $P_2$ must have a vertex in common since $G$ is not diplanar. Following the path $P_2$, we see that its intersections with $P_1$ form a sequence of vertices whose order on $P_1$ is in the direction from $v_1$ towards $u_1$ (otherwise, there would be a cycle in $G-D$). Since there are no vertices of degree 2 in $G$, this gives that $G$ is isomorphic to $\overrightarrow{C}_n^{(2)}$, where $n\ge3$.

Suppose now that $G$ has a cycle $C$ disjoint from $D$. By removing $E(C)$, we obtain a diplanar graph. We may assume that $P_1$ and $P_2$ are edge-disjoint from $C$. In the diplanar embedding of $G-E(C)$, one of the paths must be embedded in the interior of the disk bounded by the anti-digon $D$, and the other path in the exterior. Let $B_i$ be the component of $G-E(C)$ containing $P_i$. Note that $C$ contains a path from $B_1$ to $B_2$ and a path from $B_2$ to $B_1$. This implies that there are no other components beside $B_1$ and $B_2$ since the removal of a cycle contained in such a component would give a non-diplanar digraph.

We can take a $(v_1,u_1)$-trail $Q_1$ in $B_1$ and a $(v_2,u_2)$-trail $Q_2$ in $B_2$. We say that the triple $(C,Q_1,Q_2)$ is a \emph{connector} in $G':=G-\{u,v\}$ if $C$ has a vertex in common with $Q_1$ and has a vertex in common with $Q_2$. A connector exists for every cycle $C$ in $G'$ -- we obtain one by taking $Q_i$ to be an Eulerian trail in $B_i$ for $i=1,2$. The connector is \emph{full} if $E(Q_i)=E(B_i)$ for $i=1,2$. A basic observation about connectors is that $D$ together with the edges in the connector is not diplanar. This implies the following property.

\begin{claim}
\label{cl:1}
Every connector in $G'$ is full.
\end{claim}

\begin{proof}
Let $H=G'-E(C\cup Q_1\cup Q_2)$. Observe that $H$ is Eulerian.
If $(C,Q_1,Q_2)$ is not full, there is a cycle in $H$. By removing that cycle from $G$, a non-diplanar digraph is obtained, which contradicts the property of the diplanar obstructions.
\end{proof}

Let $(C,Q_1,Q_2)$ be a connector. Let $v_1=x_1, x_2,\dots x_{p-1},x_p=u_1$ be the sequence of vertices on the trail $Q_1$.
We denote by $Q_1(x_i,x_j)$ the segment of $Q_1$ from $x_i$ to $x_j$ (with slight abuse of notation if the vertex $x_i$ or $x_j$ appears twice on $Q_1$, where $i$ and $j$ are clear from the context).

\begin{claim}
\label{cl:2}
If $x_i=x_j$, where $i<j$, then $V(C)\cap V(Q_1) \subseteq \{x_{i+1},\dots, x_{j-1}\}$.
\end{claim}

\begin{proof}
Since $x_i$ appears twice in $Q_1$, it is not on $C$. Suppose that $C$ passes through a vertex $x_k$ on $Q_1$, where $k<i$ or $k>j$. Replace $Q_1$ by the trail $Q_1' = Q_1(x_1,x_i) \cup Q(x_j,x_p)$. Then $(C,Q_1',Q_2)$ is still a connector, contradicting Claim \ref{cl:1}.
\end{proof}

Suppose that $x$ and $y$ are two vertices on $C$. They split the cycle in two directed paths, the \emph{$(x,y)$-segment} $C(x,y)$ from $x$ to $y$ and the \emph{$(y,x)$-segment} $C(y,x)$ from $y$ to $x$.

\begin{claim}
\label{cl:3}
Suppose that vertices $x_i$ and $x_j$ ($i<j$) on the trail $Q_1$ lie on $C$. Then $C(x_i,x_j)$ does not intersect $Q_2$.
\end{claim}

\begin{proof}
Let $C'$ be a cycle in $Q_1(x_i,x_j) \cup C(x_j,x_i)$. It is easy to see that $u_1,u_2,v_1,v_2$ are in the same connected component of $G'-E(C')$. This implies that $G-E(C')$ is not diplanar, a contradiction.
\end{proof}

\begin{claim}
\label{cl:4}
Suppose that vertices $x_i$ and $x_j$ ($i<j$) on the trail $Q_1$ lie on $C$. Then $x_i$ and $x_{i+1}$ form an anti-digon and one of the edges $x_ix_{i+1}$ is on $C$.
\end{claim}

\begin{proof}
We may assume that $j>i$ is smallest possible such that $x_j$ belongs to $C$. Our goal is to prove that $j=i+1$ and that $C(x_i,x_{i+1})=x_ix_{i+1}$. If $x_{i+1}$ is not on $C$, then Claim 1 implies that it appears twice on $Q_1$. However, the segment between these two appearances cannot contain both $x_i$ and $x_j$, which contradicts Claim \ref{cl:2}.

Therefore we know that $j=i+1$. Let $C'=C(x_{i+1},x_i)+x_ix_{i+1}$ and let $Q_1'$ be obtained from $Q_1$ by replacing the edge $x_ix_{i+1}$ by $C(x_i,x_{i+1})$. Claim \ref{cl:3} implies that $C(x_{i+1},x_i)$ intersects $Q_2$, thus $(C',Q_1',Q_2)$ is a connector. By what we proved above, the vertex $x'$ on $Q_1'$ following $x_i$ must be on $C'$. However, $x'$ was originally part of the cycle $C$ and by Claim \ref{cl:3}, two of its edges were on $Q_1$. Thus, $x'$ can be on $C'$ only if $x'=x_{i+1}$, which gives the conclusion of the claim.
\end{proof}

\begin{claim}
\label{cl:5}
If $C$ has more than one vertex in $Q_1$, then it has precisely two vertices that are consecutive on $Q_1$ and form an anti-digon in $G$. Moreover, one of the following cases occurs: either $v_1$ and $u_1$ form an anti-digon, or $v_1=u_1$. The same holds for $v_2$ and $u_2$.
\end{claim}

\begin{proof}
Claim \ref{cl:4} implies that the vertices in $Q_1\cap C$ form an interval on $Q_1$ and all edges on this interval are contained in anti-digons. If there is more than one anti-digon then the removal of a cycle in $Q_2\cup \{vv_2,u_2u,uv\}$ gives a digraph which is not diplanar. Thus, $C$ intersects $Q_1$ precisely in two consecutive vertices $x_i, x_{i+1}$. By using Claim \ref{cl:1} it is easy to infer that $Q_1(x_1,x_i)$ is a simple path (no repeated vertices) and so is $Q_1(x_{i+1},x_p)$. Each vertex on these two subpaths apart from $x_i$ and $x_{i+1}$ appears precisely twice, once on each subpath (since $G$ has no vertices of degree 2). If the two subpaths are disjoint, then there are no vertices apart from $x_i$ and $x_{i+1}$. This means that $i=1$ and $p=2$, and thus $Q_1=v_1u_1$ forms an anti-digon.
Otherwise, $v_1=x_1$ appears twice on $Q_1$. Suppose that $x_t=x_1=v_1$ where $i+1<t\le p$.
Consider a cycle $C'$ contained in $Q_1(x_1,x_t)$. We may take $C'$ so that it passes through $v_1$. There is a corresponding connector $(C',Q_1',Q_2')$ where $Q_1'=Q_1(x_t,x_p)$ and $Q_2'$ contains all edges of $Q_2\cup C$. The claims applied to this connector show that $C'$ has at most two vertices in common with $Q_1'$. If there are two, the proof above shows that $Q_1'=v_1u_1$ (forming an anti-digon) and, since $C'$ contains $x_1=x_t$, we have $t=p-1$.
On the other hand, if $C'$ intersects $Q_1'$ only in $x_1$, then there are no other vertices on $Q_1'$ and we have $t=p$ and thus $v_1=u_1$.

The proof for $v_2$ and $u_2$ is the same. This completes the proof of the claim.
\end{proof}

In the next claim we shall consider the case when $v_1=u_1$. In this case, we consider the connector $(C,Q_1,Q_2)$, where $Q_1$ is just the vertex $v_1=u_1$, $C$ is a cycle in $G'$ containing the two edges incident with $v_1$ in $G'$, and $Q_2$ is a $(v_2,u_2)$-trail in the rest of $G'$.

\begin{claim}
\label{cl:6}
If $v_1=u_1$ and the connector $(C,Q_1,Q_2)$ is as described above, then $C=v_1y_1y_2v_1$ is a $3$-cycle, and the vertices $y_1y_2$ form an anti-digon.
\end{claim}

\begin{proof}
It follows from previous claims that $C$ intersects $Q_2$ in at most two vertices and that $C$ has no other vertices apart from those on $Q_1$ and $Q_2$. Of course $C$ is not a digon (since we have excluded digons), thus it must have two vertices on $Q_2$. The claim now follows from Claim~\ref{cl:5}.
\end{proof}

After this preparation, we are able to complete the proof. Start with the digon $D$ and consider its neighbors $u_1,u_2,v_1,v_2$. If $v_1$ and $u_1$ form an anti-digon $D'$, we use Claim~\ref{cl:5} on $D'$ and continue doing this as long as we either come back to $D$ by taking the next and the next digon and so on, or we come to the situation that an out-neighbor and an in-neighbor of the anti-digon are the same vertex, call it $x$. In the latter case, the next neighbors of $x$ form another anti-digon by Claim~\ref{cl:6}. It is now evident that we obtain the structure as described by the theorem.
\end{proof}

\section{Conclusion}\label{conclusion}

We conclude with some pointers to further research. In this paper we presented all known diplanar obstructions. It is not known if our list is complete. We determined how to obtain all obstructions with digons from those that have none. Upon applying Theorem~\ref{thm:antidigons}, this would mean that we could turn our attention to characterising diplanar obstructions where the underlying graph is simple, and either a) planar, or b) non-planar. If the underlying graph is non-planar, then we could consider
when the underlying graph contains a M\"obius ladder $M_n$ for different values of $n$.
(Consideration of this particular family has proved useful in a similar problem for characterising planar induced subgraphs.) The remaining case would then be when the underlying graph
is simple, planar, and 3-connected, which the authors hope would be more straightforward.

\subsection*{Acknowledgements.}
This paper is based on a 2005 draft manuscript prepared by D.A. with our thoughts on planar digraphs. The results are based on concepts first discovered in Auckland in December, 2004.


\bibliographystyle{abbrv}
\bibliography{digraphemb}

\end{document}